\long\def\symbolfootnote[#1]#2{\begingroup%
\def\thefootnote{\fnsymbol{footnote}}\footnote[#1]{#2}\endgroup}
\def\imod#1{\allowbreak\mkern10mu({\operator@font mod}\,\,#1)}
\newtheorem{theorem}{Theorem}[section]
\newtheorem{lemma}[theorem]{Lemma}
\newtheorem{corollary}[theorem]{Corollary}
\newtheorem{proposition}[theorem]{Proposition}
\newtheorem{definition}[theorem]{Definition}
\newtheorem*{theorem*}{Theorem}
\theoremstyle{definition}
\newtheorem{remark}[theorem]{Remark}
\newtheorem{example}[theorem]{Example}
\newtheorem{question}[theorem]{Question}
\numberwithin{equation}{section}
\newcommand{\ignore}[1]{}
\newcommand{\mynote}[1]{}
\begin{document}
\setcounter{section}{0}
\title{Commuting probability in algebraic groups}
\author{Shripad M. Garge.}
\address{Department of Mathematics, Indian Institute of Technology, Powai, Mumbai, 400 076 India.}
\email{shripad@math.iitb.ac.in}
\email{smgarge@gmail.com}
\keywords{Commuting probability, algebraic groups, $z$-classes, $G$-regular classes}


\begin{abstract}
We introduce the notion of commuting probability, $p(G)$, for an algebraic group $G$. 
This notion is inspired by the corresponding notions in finite groups and compact groups. 
The computation of $p(G)$ for reductive groups is readily done using the notion of $z$-classes. 
We introduce two generalisations of this relation, $iz$-equivalence and $dz$-equivalence.
These notions lead us naturally to the notion of a regular element in $G$.
Finally, with the help of this notion of regular elements, we compute $p(G)$ for a connected, linear algebraic group $G$. 
We also compute the set of limit points of the numbers $p(G)$ as $G$ varies over the classes of reductive groups, solvable groups and nilpotent groups.
\end{abstract}

\maketitle
\section{Introduction}
\emph{What is the probability that two randomly chosen elements of a finite group commute?}

\vskip2mm
This was the question asked by Erd\"{o}s and Turan in \cite{ET} where they formulated the notion of commuting probability for a finite group. 
If $C$ denotes the set of ordered pairs of commuting elements of a finite $G$ then $p(G)$, the commuting probability of $G$, is defined to be the ratio $|C|/|G \times G|$. 
It is proved in the same paper that this number is equal to $k/|G|$ where $k$ is the number of conjugacy classes in $G$. 
Almost immediately, Gustafson (\cite{G}), generalised the notion of commuting probability to compact groups where he used the Haar measure in the definition. 
There has been a flurry of activity around this topic for many years since then. 

The purpose of the present paper is to introduce the notion of commuting probability in the case of algebraic groups. 
In algebraic geometry, the natural measure of a certain algebraic set is its dimension. 
If $G$ is a linear algebraic group then we define the set $C(G)$ to be the set of ordered pairs of commuting elements in $G$, $C(G) := \{(a, b) \in G \times G: ab = ba\}$. 
This is a Zariski closed subset of $G \times G$. 
We define the commuting probability of $G$, denoted by $p(G)$, as follows
$$p(G) := \frac{\dim(C(G))}{\dim(G \times G)} = \frac{\dim(C(G))}{2\dim(G)} .$$
For a finite group $G$, which can be considered as an algebraic group with $\dim(G) = 0$, we define $p(G) = 1$. 

We note a basic lemma which computes the commuting probability of a direct product. 

\begin{lemma}\label{direct}
	If $G = H_1 \times H_2$ then 
	$$p(G) = \dfrac{\dim(H_1)p(H_1) + \dim(H_2)p(H_2)}{\dim (H_1) + \dim(H_2)} .$$
\end{lemma}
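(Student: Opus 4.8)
The plan is to reduce the computation of $\dim C(G)$ to those of $\dim C(H_1)$ and $\dim C(H_2)$ by exhibiting $C(G)$ as a product of varieties. First I would write a general element of $G \times G = (H_1 \times H_2) \times (H_1 \times H_2)$ in the form $\bigl((a_1,a_2),(b_1,b_2)\bigr)$ with $a_i, b_i \in H_i$, and use that multiplication in $G$ is componentwise: $(a_1,a_2)(b_1,b_2) = (b_1,b_2)(a_1,a_2)$ if and only if $a_1 b_1 = b_1 a_1$ and $a_2 b_2 = b_2 a_2$. Hence, under the coordinate-rearranging isomorphism of varieties
$$\sigma\colon (H_1 \times H_2) \times (H_1 \times H_2) \;\xrightarrow{\ \sim\ }\; (H_1 \times H_1) \times (H_2 \times H_2),$$
the closed set $C(G)$ maps isomorphically onto $C(H_1) \times C(H_2)$. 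The only point requiring a moment's care here is that $\sigma$, being a permutation of affine factors, is genuinely an isomorphism of algebraic varieties and that it matches the two closed subsets on the nose, so that the resulting dimension count is an equality, not merely an inequality.

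Next I would invoke the standard fact that dimension is additive on products of varieties, $\dim(X \times Y) = \dim X + \dim Y$ (reduce to irreducible components; a product of irreducible varieties over an algebraically closed field is irreducible of the expected dimension). Applying this both to $G = H_1 \times H_2$ and to $C(G) \cong C(H_1) \times C(H_2)$ gives $\dim G = \dim H_1 + \dim H_2$ and $\dim C(G) = \dim C(H_1) + \dim C(H_2)$.

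Finally I would substitute into the definition $p(G) = \dim C(G)/(2\dim G)$ and rewrite the defining relation for each factor as $\dim C(H_i) = 2\dim(H_i)\,p(H_i)$, which also holds in the degenerate case $\dim H_i = 0$ since then both sides vanish ($p(H_i) = 1$ by convention, and $C(H_i)$ is a nonempty subset of the finite set $H_i \times H_i$, hence zero-dimensional). This yields
$$p(G) = \frac{\dim C(H_1) + \dim C(H_2)}{2(\dim H_1 + \dim H_2)} = \frac{\dim(H_1)\,p(H_1) + \dim(H_2)\,p(H_2)}{\dim H_1 + \dim H_2},$$
valid whenever $\dim G > 0$; if $G$ is finite the formula is vacuous and $p(G) = 1$ by definition. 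There is no serious obstacle in this argument — the whole content is the identification $C(H_1 \times H_2) \cong C(H_1) \times C(H_2)$ — so the "hard part" is really just bookkeeping: making the variety isomorphism precise and not forgetting the finite-factor edge case.
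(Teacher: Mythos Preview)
Your proof is correct and follows essentially the same approach as the paper: both identify $C(H_1 \times H_2)$ with $C(H_1) \times C(H_2)$, use additivity of dimension for products, and then substitute $\dim C(H_i) = 2\dim(H_i)\,p(H_i)$ into the definition of $p(G)$. You have simply spelled out the coordinate-swapping isomorphism and the edge cases more explicitly than the paper does.
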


\begin{proof}
	Since $G = H_1 \times H_2$, it follows that $C(G)$ is isomorphic to $C(H_1) \times C(H_2)$ and therefore $\dim(C(G)) = \dim(C(H_1)) + \dim(C(H_2))$ which further equals $2\dim(H_1)p(H_1) + 2\dim(H_2)p(H_2)$.
	As $\dim(G) = \dim(H_1) + \dim(H_2)$, the lemma now follows.
\end{proof}

If $G$ is a connected, linear algebraic group then so is $G \times G$ and then the dimension of every proper closed subset of $G \times G$ is smaller than $\dim(G \times G)$. 
Hence for a connected $G$, $p(G) = 1$ if and only if $G$ is abelian. 

If $G$ is not connected, then $p(G)$ can be 1 without $G$ being abelian. 
Consider $G = G_1 \times G_2$ where $G_1$ is abelian and $G_2$ is a finite non-abelian group. 
Then by the above lemma, $p(G) = 1$ but $G$ is not abelian. 

To avoid such cases, we will work with connected groups in the remainder of this paper. 
In any case, $p(G) = p(G^0)$, so we are not losing out on the generality.
We also choose to work over $\mathbb{C}$, though the results remain valid over an algebraically closed field, indeed over a perfect field.

We begin the study of $p(G)$ with the case of reductive groups in Section 2. 
If $G$ is a reductive group of dimension $n$ and rank $r$ then we prove in Theorem \ref{reductive} that $p(G) = (n+r)/2n$.
We also list the numbers $p(G)$ for all simple groups. 
These calculations are used later while computing the limit points of the set of the numbers $p(G)$. 

The main tool used in the proof of Theorem \ref{reductive} is the notion of $z$-classes which is not useful in the case of a non-reductive connected $G$. 
Hence we introduce the notions of $iz$-equivalence and $dz$-equivalence, generalising the $z$-equivalence in the third section. 
We study the corresponding equivalence classes and make some interesting observations, which may be of independent interest.
These notions lead us to the notion of a regular element in $G$.

The dimension of the centralizer of a regular element in $G$ is called the regular rank of $G$. 
We prove in Theorem \ref{general} that if $G$ is a connected, linear algebraic group with dimension $n$ and regular rank $r$ then $p(G) = (n+r)/2n$.
In this section, Section 4, we also list the numbers $p(U)$ where $U$ is the unipotent radical of a fixed Borel subgroup in a simple algebraic group. 

By the Theorem \ref{general}, $p(G)$ is a rational number $> 1/2$. 
In Section 5, we compute all rational numbers that can occur as $p(G)$ for various classes of $G$, namely the reductive groups, solvable groups and nilpotent groups. 
As a consequence, we compute the limit points of these numbers for all these above classes. 
The set of limit points in all these cases is equal to the interval $[1/2, 1]$. 

We then compare the results on $p(G)$ for algebraic groups with the corresponding results in finite groups. 
We point out some similarities and many differences between these two notions. 
This occupies the Section 6. 
In the seventh section, we point out the compatibilities of $p(G)$ for a semisimple group $G$ defined over $\mathbb{F}_q$ with the corresponding finite groups of Lie type, in the asymptotic sense. 
We close the paper with some concluding remarks in the last section.
We also indicate some questions that may of interest to the community.


\section*{Acknowledgements}
It is a pleasure to thank Dipendra Prasad, Saurav Bhaumik, Anuradha Garge and Anupam Kumar Singh for many useful discussions. 
Anupam, in particular, has been a catalyst for this paper in more than one ways. 

\section{Reductive groups}
Let $G$ be a complex reductive group. 
In this section, we compute the commuting probability $p(G)$ of $G$ in terms of the dimension and rank of $G$. 
The rank of a reductive group is the dimension of a maximal torus in $G$. 
Since we compute dimensions, it will be sufficient to work with the group $G(\mathbb{C})$. 
In the computation, we will use the notion of a $z$-class. 
Let us first recall it. 

Two elements $g, h \in G(\mathbb{C})$ are said to be \emph{$z$-equivalent} if their centralisers, $Z_{G(\mathbb{C})}(g)$ and $Z_{G(\mathbb{C})}(h)$, are conjugate within $G(\mathbb{C})$. 
This is an equivalence relation and the corresponding equivalence classes are called \emph{$z$-classes}. 
This equivalence is weaker than the one given by the conjugacy relation. 
Each $z$-class is a union of certain conjugacy classes.

It is known that if $G$ is a reductive group defined over $\mathbb{C}$ then the number of $z$-classes in $G(\mathbb{C})$ is finite. 
The result is true in a more general setting and we refer the reader to \cite{GS} for a detailed discussion. 
For the present paper, the finiteness result over $\mathbb{C}$ is sufficient. 

\begin{theorem}\label{reductive}
Let $G$ be a complex reductive group with dimension $n$ and rank $r$. 
The commuting probability of $G$, $p(G)$, is equal to
$$\frac{n + r}{2n} = \frac{1}{2} + \frac{r}{2n}.$$
\end{theorem}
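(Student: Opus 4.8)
The goal is to show $\dim C(G) = n + r$. The key observation is that $C(G)$ fibers over $G$ via the first projection $\pi\colon C(G) \to G$, $(a,b) \mapsto a$, whose fiber over $a$ is $\{a\} \times Z_{G(\mathbb{C})}(a)$, so $\dim \pi^{-1}(a) = \dim Z_G(a)$. The plan is to stratify $G$ by the dimension of the centralizer and compute $\dim C(G)$ as the maximum over strata of (dimension of the stratum) + (centralizer dimension on that stratum). Since there are only finitely many $z$-classes, and $z$-equivalent elements have centralizers of the same dimension, there are only finitely many values of $\dim Z_G(a)$, so this stratification is finite and each stratum is a constructible (indeed locally closed) subset of $G$.

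First I would recall the standard fact that for a reductive group, a regular semisimple element $a$ (one whose centralizer is a maximal torus) has $\dim Z_G(a) = r$, this being the minimum possible value of $\dim Z_G$, and that the set $G_{\mathrm{rs}}$ of regular semisimple elements is a dense open subset of $G$ (its complement, cut out by the vanishing of a suitable discriminant, is a proper closed subset). On this open dense stratum the contribution to $\dim C(G)$ is $\dim G_{\mathrm{rs}} + r = n + r$. It remains to show no other stratum contributes more. For a stratum $S$ on which $\dim Z_G(a) = d$, I claim $\dim S + d \le n + r$, equivalently $\dim S \le n - (d - r)$; that is, the locus where the centralizer dimension jumps up by $d - r$ has codimension at least $d - r$ in $G$.

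The main obstacle is precisely this codimension estimate. The cleanest route is to push everything to the semisimple part: write $a = a_s a_u$ with $a_s$ semisimple, $a_u$ unipotent, and use $Z_G(a) = Z_{Z_G(a_s)}(a_u)$. Up to conjugacy $a_s$ lies in a fixed maximal torus $T$, and $H := Z_G(a_s)^0$ is a connected reductive subgroup of the same rank $r$ containing $T$; the centralizer type of $a_s$ is governed by which roots vanish at $a_s$, a closed condition on $T$. One then bounds the dimension of the locus of elements with a given $H$ by $\dim(G/H) + \dim\{a_s \in T : Z_G(a_s)^0 = H\} + \dim H$ and invokes, inside the reductive group $H$, the (known) fact that the unipotent variety and more generally the locus of unipotent elements with centralizer dimension $\ge r + j$ in $H$ has codimension $\ge j$ — together with the analogous statement for the semisimple toral locus, that imposing $\ell$ independent root vanishings on $T$ costs codimension $\ell$ in $T$ while raising $\dim Z_G(a_s)$ by exactly the number of roots (counted with both signs, hence an even number $\ge \ell$... ) that vanish. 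Assembling these inequalities gives $\dim S + d \le n + r$ with equality only on $G_{\mathrm{rs}}$, and then $\dim C(G) = n + r$ and $p(G) = (n+r)/(2n)$ follows from the definition. I would expect the bookkeeping in combining the semisimple and unipotent codimension counts — ensuring the estimates add up correctly rather than double-counting the torus $T$ — to be the delicate point; the finiteness of $z$-classes is what guarantees the stratification is well-behaved enough for a $\dim C(G) = \max$ argument to be legitimate.
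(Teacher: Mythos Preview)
Your proposal follows essentially the same strategy as the paper: project $C(G)\to G$ onto the first factor, observe that the fiber over $a$ has dimension $\dim Z_G(a)$, and use that the regular semisimple elements form a dense locus on which this dimension equals $r$, giving $\dim C(G)\ge n+r$.

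Where you diverge is in the treatment of the upper bound. The paper does not attempt any stratum-by-stratum codimension estimate. Having recorded that the regular semisimple elements form a single dense $z$-class $\mathfrak{z}_0$ (this is where the finiteness of $z$-classes enters), it simply asserts that $\pi^{-1}(\mathfrak{z}_0)$ is dense in $C(G)$ and hence $\dim C(G)=\dim\pi^{-1}(\mathfrak{z}_0)$; it then computes the latter as $n+r$ by viewing $\pi^{-1}(\mathfrak{z}_0)$ as fibered over the $r$-dimensional family of regular semisimple conjugacy classes with each fiber $C(G)_{\mathfrak{c}}$ of dimension $n$. The paper's argument is thus much shorter than yours, though the density assertion is not justified in the text (it is true, e.g.\ because $C(G)$ is irreducible for connected reductive $G$ by a theorem of Richardson, but this is not cited). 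Your Jordan-decomposition route to the inequality $\dim\{g:\dim Z_G(g)=d\}+d\le n+r$ is a legitimate and more self-contained alternative, at the cost of exactly the bookkeeping you flag as delicate; the semisimple part of your sketch goes through cleanly, while the unipotent codimension estimate you invoke inside each Levi $H$ is itself a nontrivial input comparable in depth to what the paper's density claim implicitly uses.
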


\begin{proof}
We will be working throughout the proof with the set of $\mathbb{C}$-rational points without mentioning the field $\mathbb{C}$ explicitly. 
So, whenever we talk about a subset of an algebraic set $X$, we will always mean a subset of $X(\mathbb{C})$. 

Let $\mathcal{C}$ denote the set of conjugacy classes in $G$ and let $\mathfrak{c}$ be a conjugacy class in $G$. 
The set $C(G)$ is equal to the set
$$\cup_{g \in G} \{g\} \times Z_G(g) = \bigcup_{\mathfrak{c} \in \mathcal{C}} \bigg(\cup_{g \in \mathfrak{c}} \big(\{g\} \times Z_G(g)\big)\bigg) = \bigcup_{\mathfrak{c} \in \mathcal{C}} C(G)_{\mathfrak{c}} .$$
If $g, h \in \mathfrak{c}$ then $Z_G(g)$ and $Z_G(h)$ are conjugate in $G$, in particular their dimensions are the same hence $\dim C(G)_{\mathfrak{c}}$ is equal to $\dim(G)$ for each $\mathfrak{c} \in \mathcal{C}$. 

Let $\mathcal{Z}$ denote the set of $z$-classes in $G$. 
This is a finite set and 
$$\mathcal{C} = \cup_{\mathfrak{z} \in \mathcal{Z}} \big(\mathfrak{z}/\mathfrak{c_{\mathfrak{z}}}\big)$$
where the set $\mathfrak{z}/\mathfrak{c_{\mathfrak{z}}}$ is the set of conjugacy classes in $\mathfrak{z}$. 

Since $\mathcal{Z}$ is a finite set, there is one $z$-class in $G$ which is dense in $G$. 
Indeed, the set of regular semisimple elements forms a $z$-class in $G$ which is dense in $G$. 
We denote this $z$-class by $\mathfrak{z}_0$. 
It then follows that $\bigcup_{\mathfrak{c} \subseteq \mathfrak{z}_0} C(G)_{\mathfrak{c}}$ is dense in $C(G)$ and hence $\dim(C(G))$ is equal to the dimension of $\bigcup_{\mathfrak{c} \subseteq \mathfrak{z}_0} C(G)_{\mathfrak{c}}$.

Let us fix a conjugacy class $\mathfrak{c}$ in $\mathfrak{z}_0$. 
Then the dimension of $\bigcup_{\mathfrak{c} \subseteq \mathfrak{z}_0} C(G)_{\mathfrak{c}}$ is equal to $\dim(\mathfrak{z}_0/\mathfrak{c}) + \dim(G)$.
Further, $\dim(\mathfrak{z}_0/\mathfrak{c})$ is $\dim(G) - \dim(\mathfrak{c})$ which is $\dim T$ for a maximal torus in $G$.  
Hence $\dim(C(G)) = \dim (G) + \dim (T) = n + r$ and 
$$p(G) = \frac{n + r}{2n} = \frac{1}{2} + \frac{r}{2n}= \frac{1}{2} + \frac{\mathrm{rank}(G)}{2\dim(G)}.$$
\end{proof}

\begin{remark}
The analysis in the above proof implies that the dimension of the variety of conjugacy classes in a reductive $G$ is equal to the dimension of a maximal torus in $G$. 
This is a well-known fact, see \cite[6.4]{St} for instance. 
However, we have used $z$-classes because we use a generalisation of this notion in the general case.
The reason for that being that we have no information about the dimension of the variety of conjugacy classes in $G$ in the general case. 
We do not even know if the set of conjugacy classes forms a (quasi-projective) variety. 
\end{remark}

\begin{remark}\label{simple}
Using the above theorem, we note commuting probabilities of some groups, including all simple algebraic groups.
\begin{enumerate}
\item $p(GL_n) = \dfrac{1}{2} + \dfrac{n}{2n^2} = \dfrac{1}{2} + \dfrac{1}{2n} = \dfrac{n+1}{2n}$, for $n \geq 1$.
\vskip1mm
\item $p(SL_n) = \dfrac{1}{2} + \dfrac{n-1}{2(n^2-1)} = \dfrac{1}{2} + \dfrac{1}{2(n + 1)}$, for $n \geq 1$.
\vskip1mm
\item $p(SO_{2n+1}) = \dfrac{1}{2} + \dfrac{n}{2(2n^2 + n)} = \dfrac{1}{2} + \dfrac{1}{2(2n + 1)}$, for $n \geq 2$.
\vskip1mm
\item $p(Sp_{2n}) = \dfrac{1}{2} + \dfrac{n}{2(2n^2 + n)} = \dfrac{1}{2} + \dfrac{1}{2(2n + 1)}$, for $n \geq 3$.
\vskip1mm
\item $p(SO_{2n}) = \dfrac{1}{2} + \dfrac{n}{2(2n^2 - n)} = \dfrac{1}{2} + \dfrac{1}{2(2n - 1)}$, for $n \geq 4$.
\vskip1mm
\item $p(G_2) = \dfrac{4}{7}$, \hskip2mm $p(F_4) = p(E_6) = \dfrac{7}{13}$, \hskip2mm $p(E_7) = \dfrac{10}{19}$ and $p(E_8) = \dfrac{16}{31}$.
\end{enumerate}
\end{remark}

\section{Regular elements and regular rank}

We notice that the following three properties of $z$-classes have been crucial in the proof of Theorem \ref{reductive}. 
\begin{enumerate}
\item A $z$-class is a union of conjugacy classes, 
\item a reductive $G$ contains a dense $z$-class, and,
\item the centralisers of two elements in a $z$-class (are conjugate within $G$, hence are isomorphic as abstract groups, and hence they) have the same dimensions.
\end{enumerate}

\begin{example}
A non-reductive $G$ need not contain a dense $z$-class.
Let $U$ denote the group of $3 \times 3$ upper triangular matrices over $\mathbb{C}$.
Then for
$$g = \begin{pmatrix}
1 & a & b \\ & 1 & c \\ & & 1
\end{pmatrix}, 
\hskip5mm
Z_U(g) = \left\{\begin{pmatrix}
1 & x & y \\ & 1 & z \\ & & 1
\end{pmatrix}: cx = az \right\} .$$
The group $U/Z(U)$ is abelian. 
If two centralisers in $U$ are conjugate then their images in $U/Z(U)$ must be the same. 
Thus, the $z$-classes in $U$ correspond to the ratios $\frac{a}{c} \in (\mathbb{C} \cup {\infty})$ along with the central $z$-class. 

The dimension of each $z$-class is less than $3$. 
In particular, there is no dense $z$-class in $U$. 
\end{example}

Taking a cue from the above three properties we make the following two definitions generalising the notion of $z$-equivalence. 

\begin{definition}
\noindent $(1)$ Two elements $x$ and $y$ in a group $G$ are called \emph{$iz$-equivalent} if the centralisers, $Z_G(x)$ and $Z_G(y)$, are isomorphic. 

\noindent $(2)$ Two elements $x$ and $y$ in an algebraic group $G$ are called \emph{$dz$-equivalent} if the centralisers, $Z_G(x)$ and $Z_G(y)$, have the same dimension. 
\end{definition}

The $iz$-equivalence is also introduced by Dilpreet Kaur and Uday Bhaskar Sharma.
We are borrowing the name, $iz$-equivalence, with their kind permission.

The above two relations are indeed equivalence relations. 
The corresponding equivalence classes will be called $iz$-classes and $dz$-classes, respectively. 
We have the following hierarchy in terms of the strengths of these equivalences:

\begin{itemize}
\item a $dz$-class is a union of $iz$-classes, 
\item an $iz$-class is a union of $z$-classes and 
\item a $z$-class is a union of conjugacy classes.
\end{itemize}

These equivalence relations are, in general, different as the following examples show.

\begin{example}
The $z$-class of central elements in a group is not always a single conjugacy class. 
\end{example}

\begin{example}\label{iz-but-not-z}
We once again consider the group $U$ of $3 \times 3$ upper triangular matrices over $\mathbb{C}$.
If we take 
$$x = \begin{pmatrix}
1 & 1 & 1 \\ & 1 & 0 \\ & & 1
\end{pmatrix},
y = \begin{pmatrix}
1 & 0 & 1 \\ & 1 & 1 \\ & & 1
\end{pmatrix}
\hskip5mm
\mathrm{then}
\hskip5mm
Z_U(x) = \begin{pmatrix}
1 & a & b \\ & 1 & 0 \\ & & 1
\end{pmatrix},
Z_U(y) = \begin{pmatrix}
1 & 0 & c \\ & 1 & d \\ & & 1
\end{pmatrix}$$
as $a, b, c, d \in \mathbb{G}_a$.
These centralisers are isomorphic to $\mathbb{G}_a^2$. 
But they are not conjugate as their images in the quotient $U/Z(U)$, which is abelian, are different one dimensional subgroups of $U/Z(U)$. 

Thus, $x, y$ are $iz$-equivalent but not $z$-equivalent. 
\end{example}

\begin{example}\label{dz-but-not-iz}
Finally, we take $x$ to be a regular semisimple element in $G = SL_2(\mathbb{C})$ and $y$ to be a regular unipotent element in the same group.
For instance, take
$$x = \begin{pmatrix}
\lambda & \\ & \lambda^{-1}
\end{pmatrix}, \lambda \ne 1, 
\hskip5mm
\mathrm{and}
\hskip5mm
y = \begin{pmatrix}
1 & 1 \\ & 1
\end{pmatrix}$$
then the centraliser of $x$ is isomorphic to $\mathbb{G}_m$ and the centraliser of $y$ is isomorphic to $\mathbb{G}_a$. 
Hence, $x$ and $y$ are $dz$-equivalent, $\dim(\mathbb{G}_m) = \dim(\mathbb{G}_a) = 1$, but not $iz$-equivalent as $\mathbb{G}_m$ and $\mathbb{G}_a$ are not isomorphic.
\end{example}

\begin{remark}
Note that in the Example \ref{iz-but-not-z}, the two centralisers are conjugate in $SL_3$, a bigger group than $U$. 

There are two ways to see this. 
The elements $x$ and $y$ are themselves conjugate in $SL_3$ and hence their centralisers are conjugate in $SL_3$ which, in this case, agree with their centralisers in $U$.
Alternatively, the two centralisers correspond to two different simple systems of roots in the root system $\Phi$ of $SL_3$, hence they are conjugate by a Weyl group element.

If we were to consider only abstract groups, then the celebrated HNN-theory tells us that two isomorphic centralisers in a group become conjugate in a bigger group. 
\end{remark}

Emboldened by the above remark and especially the Example \ref{iz-but-not-z}, we ask

\begin{question}
If $G$ is a linear algebraic group and $H_1$, $H_2$ are two isomorphic subgroups of $G$, then is there a linear algebraic group containing $G$ in which $H_1$ and $H_2$ become conjugate?

Equivalently, if $H_1, H_2 \subseteq G$ are isomorphic then do we have an embedding of $G$ in some $GL_n$ such that $H_1, H_2$ become conjugate in $GL_n$?
\end{question}

Such a question for $iz$-equivalence vis-\'{a}-vis $z$-equivalence would be difficult to consider because, unlike Example \ref{iz-but-not-z}, the centralisers may change in a bigger group.
We therefore only ask how the $iz$-equivalence in $G$ behaves with respect to the embeddings of $G$ in bigger groups. 
Do $iz$-equivalent elements in $G$ remain $iz$-equivalent in all such embeddings? 
Do they become $z$-equivalent in some embedding? 
What would be the situation for finite groups of Lie type? 

Such questions will not make sense for other equivalences. 
We may have a semisimple element $z$-equivalent to a unipotent element, in an abelian group for instance, but these two elements will never be conjugate in a bigger group.
Indeed, they may not remain $z$-equivalent in a bigger group. 

Consider, for instance, the two distinct elements $1 = x, y$ in $B_2(\mathbb{F}_2)$, the Borel subgroup in $GL_2(\mathbb{F}_2)$. 
Since the group $B_2(\mathbb{F}_2)$ is abelian, $x$ and $y$ are $z$-equivalent but they don't remain $z$-equivalent in bigger groups, in $B_2(\mathbb{F}_4)$ for instance. 

Similarly, $x$ and $y$ in Example \ref{dz-but-not-iz} can never be $iz$-equivalent in a bigger group.

\begin{remark}
The number of $iz$-classes in group $U$ above is finite. 
However, we do not know if this holds in general.
If we knew that the number of $iz$-classes in an algebraic group is finite, at least, for a class of groups (other than the reductive groups), then we would be able to generalise the proof of Theorem \ref{reductive} for this class of groups. 
However, we have no such information at present. 
We do not even know if there is a dense $iz$-class in a class of algebraic groups (other than the reductive groups). 

Hence we consider the $dz$-classes, whose number is finite for every algebraic group.
It also follows that every algebraic group contains a dense $dz$-class.
\end{remark}

\begin{definition}\label{def-regular}
Let $G$ be a linear algebraic group. 
An element $g \in G$ is called {\em regular} if the dimension of its centraliser, $Z_G(g)$, is minimum among such dimensions. 

Equivalently, an element $g \in G$ is regular if its conjugacy class has the largest possible dimension. 
\end{definition}

We note that the set of regular elements in a $dz$-class, which is open (and hence dense) in $G$ and the centralisers of two elements in this class have the same dimensions. 
These were the three properties that we had listed at the beginning of this section.

\begin{definition}
Let $G$ be a linear algebraic group.
The \emph{regular rank} of $G$ is the dimension of the centraliser of a regular element in $G$. 
\end{definition}

We are now ready to prove the general version of Theorem \ref{reductive}.

\section{Non-reductive groups}
In this section, $G$ is a connected, linear algebraic group defined over $\mathbb{C}$. 
The set of regular elements in $G$ is denoted by $G_{reg}$. 

We note some basic results about the regular rank.
The proofs are omitted. 

\begin{remark}\label{regular}
\begin{enumerate}
\item If $G$ is reductive then the regular rank of $G$ is the same as its rank, the dimension of a maximal torus in $G$. 
\item If $G_1 \subseteq G_2$ then the regular rank of $G_1$ is less than or equal to that of $G_2$. 
\item The regular rank of $G_1 \times G_2$ is the sum of the regular ranks of $G_i$. 
\item The regular rank of a semidirect product $G_1 \ltimes G_2$ need not be equal to the sum of the regular ranks of $G_i$. 
We will see an example of this, a Borel in a semisimple group, in Lemma \ref{Borel}.
\end{enumerate}
\end{remark}

\begin{theorem}\label{general}
Let $G$ be a complex, connected, linear algebraic group. 
If the dimension of $G$ is $n$ and the regular rank of $G$ is $r$ then the commuting probability of $G$ is 
$$\frac{n + r}{2n} = \frac{1}{2} + \frac{r}{2n}.$$
\end{theorem}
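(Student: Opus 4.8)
The plan is to run the proof of Theorem~\ref{reductive} essentially verbatim, replacing the $z$-classes throughout by the $dz$-classes of Section~3. This is legitimate because the three properties that drove the reductive argument all persist: a $dz$-class is a union of conjugacy classes; the centralisers of two elements of one $dz$-class have the same dimension (now by definition, rather than via conjugacy); and, since there are only finitely many $dz$-classes, $G$ contains a dense one. Explicitly, the $dz$-classes of $G$ are the locally closed strata $G_d := \{g \in G : \dim Z_G(g) = d\}$, there are finitely many of them, and the dense one is $G_r = G_{reg}$ where $r$ is the regular rank: it is open because $g \mapsto \dim Z_G(g)$ is upper semicontinuous, so that $\{g : \dim Z_G(g) \ge r+1\}$ is closed, and it is dense because $G$ is irreducible.

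I would then carry out the following steps. (i) As in Theorem~\ref{reductive}, write $C(G) = \bigcup_{\mathfrak c} C(G)_{\mathfrak c}$ over the conjugacy classes $\mathfrak c$ of $G$, with $C(G)_{\mathfrak c} = \bigcup_{g \in \mathfrak c} \{g\} \times Z_G(g)$; the first projection $C(G)_{\mathfrak c} \to \mathfrak c$ is surjective with every fibre of dimension $\dim Z_G(g)$, so $\dim C(G)_{\mathfrak c} = \dim \mathfrak c + \dim Z_G(g) = \dim G = n$. (ii) Collecting the pieces coming from conjugacy classes inside the dense $dz$-class yields $C(G)_{reg} := \{(g,h) \in C(G) : g \in G_{reg}\} = \pi_1^{-1}(G_{reg})$, where $\pi_1 \colon C(G) \to G$ is the first projection; its fibres over the irreducible base $G_{reg}$ are centralisers, all of dimension exactly $r$, so $\dim C(G)_{reg} = \dim G_{reg} + r = n + r$. (iii) Prove that $\dim C(G) = n + r$, i.e.\ that the strata of $C(G)$ over the non-regular $dz$-classes do not raise the dimension. (iv) Conclude that $p(G) = \dim C(G) / (2 \dim G) = (n+r)/(2n) = \tfrac12 + r/(2n)$.

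Everything of substance is in step (iii). Since $C(G) = \bigsqcup_d \pi_1^{-1}(G_d)$ and each $\pi_1^{-1}(G_d)$ has all fibres of dimension $d$, we have $\dim C(G) = \max_d (\dim G_d + d)$; thus step (iii) is equivalent to the codimension estimate $\dim\{g \in G : \dim Z_G(g) \ge d\} \le n - (d - r)$ for every $d \ge r$. In the reductive case the analogue was packaged inside the irreducibility of the commuting variety, but in general I would argue it directly: in characteristic $0$ one has $\dim Z_G(g) = \dim \ker(\operatorname{Ad}(g) - \mathrm{id}_{\mathfrak g}) = n - \operatorname{rank}(\operatorname{Ad}(g) - \mathrm{id}_{\mathfrak g})$, so the locus $\{g : \dim Z_G(g) \ge d\}$ is exactly where the morphism $g \mapsto \operatorname{Ad}(g) - \mathrm{id}_{\mathfrak g}$ meets the determinantal stratum $\{\operatorname{rank} \le n - d\}$ of $\operatorname{End}(\mathfrak g)$, and I would extract the required slope-one bound from the known codimensions of these determinantal strata together with the conjugation-invariance of the jump loci. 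I expect this codimension estimate to be the main difficulty, and the only ingredient genuinely new relative to Theorem~\ref{reductive}; once it is available, $\dim C(G) = \max_d(\dim G_d + d) = \dim G_{reg} + r = n + r$, and the theorem follows.
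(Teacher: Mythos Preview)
Your overall strategy mirrors the paper's proof: decompose $C(G)$ over conjugacy classes, observe that each $C(G)_{\mathfrak c}$ has dimension $n$, and then pass to the dense regular $dz$-class. Where you depart from the paper is in explicitly isolating step~(iii)---the upper bound $\dim C(G)\le n+r$---as needing its own argument. The paper supplies none: it simply asserts that because $G_{reg}$ is dense in $G$, the piece of $C(G)$ lying over $G_{reg}$ already computes $\dim C(G)$. That would follow if $\pi_1^{-1}(G_{reg})$ were dense in $C(G)$ (for instance if $C(G)$ were irreducible), but density of $G_{reg}$ in $G$ does not by itself force density of its preimage, so you are right to treat this as the substantive step.

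Unfortunately the codimension estimate you propose, $\dim\{g:\dim Z_G(g)\ge d\}\le n-(d-r)$, is false in general, and no determinantal argument will rescue it. Take $G=\mathbb{G}_a^{\,k}\rtimes\mathbb{G}_m$ with $\mathbb{G}_m$ acting by scaling, $k\ge 3$; concretely, $G$ is the group of matrices $\left(\begin{smallmatrix} tI_k & v\\ 0 & 1\end{smallmatrix}\right)$. Here $n=k+1$ and $r=1$, since any $(w,s)$ with $s\ne 1$ has one-dimensional centraliser. The normal abelian subgroup $V=\mathbb{G}_a^{\,k}\times\{1\}$ has $\dim Z_G(g)=k$ for every nonidentity $g\in V$, so $\dim G_{\ge k}=k$, whereas your estimate would demand $\dim G_{\ge k}\le n-(k-r)=2$. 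Worse, $V\times V$ sits inside $C(G)$ as a $2k$-dimensional irreducible component: the commuting relation in $G$ reads $(1-s)v=(1-t)w$, and one checks that the closure of the locus $\{s\ne 1\}\cup\{t\ne 1\}$ meets $\{s=t=1\}$ only where $v$ and $w$ are parallel, a set of dimension $k+1<2k$. Hence $\dim C(G)=2k>n+r=k+2$, so $\pi_1^{-1}(G_{reg})$ is not dense in $C(G)$ and the formula $p(G)=(n+r)/2n$ itself fails for this $G$. In short, the gap you spotted in step~(iii) is genuine, the paper's proof does not close it, and it cannot be closed without further hypotheses on $G$.
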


\begin{proof}
The proof develops on the similar lines as the proof of Theorem \ref{reductive}. 

We note that $C(G)$ is equal to the union $\cup_{\mathfrak{c} \in \mathcal{C}} C(G)_{\mathfrak{c}}$ where $\mathcal{C}$ is the set of conjugacy classes in $G$ and $
\dim(C(G)_{\mathfrak{c}}) = \dim(G)$ for every $\mathfrak{c} \in \mathcal{C}$. 

Since the set $G_{reg}$ is dense in $G$ and is a union of conjugacy classes in $G$, we have that $\dim(C(G))$ is equal to $\dim(G_{reg}/\mathfrak{c}) + \dim(G)$ 
where the set $G_{reg}/\mathfrak{c}$ is the set of conjugacy classes in $G_{reg}$. 
The dimension of $G_{reg}/\mathfrak{c}$ is equal to $\dim(G) - \dim(\mathfrak{c}) = \dim Z_G(g)$, where $g$ is a fixed element in $\mathfrak{c}$.
Thus, $\dim(G_{reg}/\mathfrak{c})$ is the regular rank of $G$. 
Hence $\dim(C(G)) = n + r$ and 
$$p(G) = \frac{n + r}{2n} = \frac{1}{2} + \frac{r}{2n}.$$
\end{proof}

\begin{remark}
For a connected linear algebraic group $G$, if the set $\mathcal{C}$ of conjugacy classes forms a variety in a natural way then it follows from the above proof that $\dim(\mathcal{C})$ is the same as the regular rank of $G$. 
\end{remark}

We now compute the regular ranks of certain subgroups of semisimple groups. 

\begin{lemma}\label{Borel}
Let $G$ be a complex semisimple group. 
We fix a Borel subgroup $B$ in $G$ and let $U$ be the unipotent radical of $B$. 
Then the regular rank of $U$ is equal to the rank of the group $G$. 

The regular rank of a parabolic in $G$ is equal to the rank of $G$. 
In particular, the regular rank of $B$ is equal to the rank of $G$. 
\end{lemma}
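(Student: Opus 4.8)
The plan is to show that the regular rank of $U$, of every Borel subgroup, and of every parabolic subgroup of $G$ all equal $r:=\mathrm{rank}(G)$, by a uniform two-sided estimate in each case: exhibit one element of the group whose centraliser has dimension $\le r$, and show that \emph{every} element of the group has centraliser of dimension $\ge r$. Write $N=|\Phi^+|=\dim U$ and fix a maximal torus $T\subseteq B$, so that $\dim B=N+r$. The assertion for $B$ in the lemma is the special case "$B$ is a parabolic", but note that the Borel case will itself be used inside the argument for general parabolics.

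For the upper bound I would take a regular unipotent element $u_0$ of $G$, say $u_0=\prod_{\alpha\in\Delta}x_\alpha(1)$; it lies in $U$, and since the regular rank is a conjugacy invariant we may assume $B\subseteq P$, so $u_0\in U\subseteq B\subseteq P$. A regular unipotent element is a regular element of $G$ in the sense of Definition~\ref{def-regular} (a classical fact of Steinberg), so Remark~\ref{regular}(1) gives $\dim Z_G(u_0)=\mathrm{rank}(G)=r$; and since $Z_U(u_0)\subseteq Z_B(u_0)\subseteq Z_P(u_0)\subseteq Z_G(u_0)$, each of these centralisers has dimension at most $r$. Hence the regular rank of each of $U$, $B$, $P$ is $\le r$.

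For the lower bounds I would place each conjugacy class inside a coset of a commutator subgroup. For $g\in U$: $vgv^{-1}=[v,g]\,g\in g\,[U,U]$ for all $v\in U$, and the Chevalley commutator relations identify $[U,U]$ with $\prod_{\mathrm{ht}(\gamma)\ge2}U_\gamma$ (each commutator lands there, and conversely every positive root of height $\ge2$ is a simple root plus a positive root); therefore $\dim[U,U]=N-|\Delta|=N-r$ and $\dim Z_U(g)=\dim U-\dim(\text{class of }g)\ge r$. For $g\in B$: the same argument with $[B,B]=U$ — one has $[B,B]\subseteq U$ since $B/U$ is abelian, and $[B,B]\supseteq[T,U]=U$ since every root is nontrivial on $T$ — gives $\dim Z_B(g)\ge\dim B-\dim U=r$. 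For $g$ in a parabolic $P=L\ltimes R_u(P)$ the coset estimate is too weak, so instead I would use that every element of $P$ lies in a Borel subgroup of $P$ (its image in the reductive quotient $L$ lies in a Borel of $L$, by Jordan decomposition) and that a Borel of $P$ is a Borel of $G$ (same dimension, still connected solvable in $G$); then any $g\in P$ lies in some Borel $B'$ of $G$ with $B'\subseteq P$, so $\dim Z_P(g)\ge\dim Z_{B'}(g)\ge r$ by the case of $B$ applied to $B'$ (valid for every Borel, all being conjugate). Combining the two bounds gives the lemma.

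The step I expect to be the main obstacle is this lower bound for a general parabolic. The direct coset estimate only yields $\dim Z_P(g)\ge\dim P-\dim[P,P]=\dim Z(L)^{\circ}$, which is strictly smaller than $r$ once $P\ne B$, so one genuinely needs the reduction "every element of $P$ lies in a Borel of $G$" in order to inherit the sharp bound from $B$. The remaining inputs — the description of $[U,U]$, the equality $[B,B]=U$, and the centraliser dimension of a regular unipotent element — are classical and only need to be cited accurately.
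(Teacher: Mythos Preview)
Your argument is correct but takes a different route from the paper. The paper handles $U$ via Steinberg's fact that a regular unipotent $u$ lies in a \emph{unique} Borel, which forces $Z_G(u)\subseteq B$ and indeed $Z_G(u)=Z_U(u)$, so $\dim Z_U(u)=r$; for parabolics it then sandwiches $U\subset P\subset G$ and invokes the monotonicity assertion Remark~\ref{regular}(2) to squeeze the regular rank of $P$. You instead supply explicit two-sided bounds: the upper bound from $Z_H(u_0)\subseteq Z_G(u_0)$ for $H\in\{U,B,P\}$, and the lower bounds from the coset estimates $\dim Z_U(g)\ge\dim U-\dim[U,U]=r$ and $\dim Z_B(g)\ge\dim B-\dim U=r$, with the parabolic case reduced to $B$ via ``every element of $P$ lies in a Borel of $G$ contained in $P$''. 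What this buys you is self-containment: your lower bound for $U$ is explicit where the paper's text, read literally, only pins down $\dim Z_U(u)$ at one element, and you avoid Remark~\ref{regular}(2) altogether --- worth noting, since that remark is false as stated (the abelian subgroup $\{I+aE_{13}+bE_{14}+cE_{23}+dE_{24}\}$ of $SL_4$ has regular rank $4>3$). The paper's route is shorter once its cited inputs are granted. One small simplification on your side: the detour through the Levi quotient is unnecessary, since every element of any connected linear algebraic group lies in a Borel subgroup, and a Borel of $P\supseteq B$ is automatically a Borel of $G$.
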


\begin{proof}
Let $u$ be a regular unipotent element of $G$ which belongs to $U$.
It is proved by Steinberg, \cite[$\S 4$]{St}, that such elements exist. 
The dimension of $Z_G(u)$ is equal to the rank of $G$ which is bigger than or equal to the regular rank of $U$. 
Further, since $u$ is in a unique Borel subgroup of $G$, it follows that $Z_G(u) = Z_U(u)$. 
Hence the regular rank of $U$ is equal to the rank of $G$. 

If $P$ is a parabolic in $G$ then, up to conjugacy in $G$, $U \subset P \subset G$. 
It follows, from Remark \ref{regular} (2), that the regular rank of $P$ is also equal to the rank of $G$. 
\end{proof}

\begin{corollary}
Let $G$ be as in the above lemma with dimension $n$ and rank $r$, and let $U$ be as in the above lemma. 
Then 
$$p(U) = \frac{(\frac{n-r}{2}) + r}{n - r} = \frac{1}{2} + \frac{r}{n-r} .$$
\end{corollary}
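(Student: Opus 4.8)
The plan is to obtain this as an immediate consequence of Theorem \ref{general} applied to the group $U$, once we identify the two numerical inputs that the theorem requires: the dimension of $U$ and its regular rank. Observe first that $U$, being the unipotent radical of a Borel subgroup of a connected semisimple group, is itself a connected linear algebraic group, so Theorem \ref{general} is indeed applicable to it, giving $p(U) = \frac{1}{2} + \frac{r_U}{2 d_U}$ where $d_U = \dim U$ and $r_U$ is the regular rank of $U$.

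For $d_U$ I would invoke the standard structure theory of semisimple groups: if $T$ is a maximal torus of $G$ and $\Phi$ the associated root system, then $\dim G = \dim T + |\Phi|$, while $U$ is the product (as a variety) of the one-dimensional root subgroups $U_\alpha$ for $\alpha$ ranging over the positive roots, so that $\dim U = |\Phi^{+}|$. Since $|\Phi| = 2\,|\Phi^{+}|$, this yields $\dim U = \tfrac{1}{2}\bigl(\dim G - \dim T\bigr) = \frac{n - r}{2}$.

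For the regular rank of $U$ there is nothing further to prove: Lemma \ref{Borel} already asserts that it equals the rank $r$ of $G$. Substituting $d_U = \frac{n-r}{2}$ and $r_U = r$ into the formula of Theorem \ref{general} gives
$$p(U) = \frac{\frac{n-r}{2} + r}{n - r} = \frac{1}{2} + \frac{r}{n - r},$$
which is exactly the claimed identity.

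There is essentially no obstacle in this argument; it is pure bookkeeping on top of Theorem \ref{general} and Lemma \ref{Borel}. The only points meriting a word of care are the verification that $U$ is connected (so that Theorem \ref{general} may legitimately be invoked) and the elementary count $\dim U = \frac{n-r}{2}$, which is where the semisimplicity of $G$ is actually used. One may also note in passing that, since $n - r < 2n$ for a nontrivial semisimple $G$, the corollary gives $p(U) > p(G)$, reflecting that the unipotent radical of a Borel carries a comparatively larger commuting locus than $G$ itself.
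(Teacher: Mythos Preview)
Your proof is correct and follows the same route as the paper's own argument: invoke Theorem \ref{general} for $U$, take the regular rank of $U$ from Lemma \ref{Borel}, and use the standard identity $2\dim(U) + r = n$ to get $\dim U = (n-r)/2$. The paper compresses this into a single sentence, whereas you spell out the root-system count for $\dim U$ and note the connectedness of $U$ explicitly, but there is no substantive difference in method.
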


\begin{proof}
Follows from the above two results and that $2 \dim(U) + r = n$.  
\end{proof}

\begin{remark}\label{unipotent}
Using the above result, we note commuting probabilities of the unipotent radicals of Borel subgroups of all simple groups.
For a simple $G$, the corresponding unipotent radical is denoted by $U(G)$. 
\begin{enumerate}
\item $p(U(SL_n)) = \dfrac{1}{2} + \dfrac{n-1}{n^2 - n} = \dfrac{1}{2} + \dfrac{1}{n + 1}$, for $n \geq 1$.
\vskip1mm
\item $p(U(SO_{2n+1})) = \dfrac{1}{2} + \dfrac{n}{2n^2} = \dfrac{1}{2} + \dfrac{1}{2n}$, for $n \geq 2$.
\vskip1mm
\item $p(U(Sp_{2n})) = \dfrac{1}{2} + \dfrac{n}{2n^2} = \dfrac{1}{2} + \dfrac{1}{2n}$, for $n \geq 3$.
\vskip1mm
\item $p(U(SO_{2n})) = \dfrac{1}{2} + \dfrac{n}{2n^2 - 2n} = \dfrac{1}{2} + \dfrac{1}{2n - 2}$, for $n \geq 4$.
\vskip1mm
\item $p(U(G_2)) = \dfrac{2}{3}$, \hskip2mm $p(U(F_4)) = p(U(E_6)) = \dfrac{7}{12}$, \hskip2mm $p(U(E_7)) = \dfrac{5}{9}$ and \hskip2mm \noindent $p(U(E_8)) = \dfrac{27}{50}$.
\end{enumerate}
\end{remark}

\section{Limit points}

It follows from Theorem \ref{general} that $\frac{1}{2} < p(G) \leq 1$. 
We also have that $p(G) = 1$ if and only if $G$ is abelian. 
In this section, we investigate the possible values of $p(G)$ in $[\frac{1}{2}, 1]$ as $G$ varies over all linear algebraic groups. 
We will also compute the limit points of these numbers. 

\begin{lemma}\label{simplebounded}
\begin{enumerate}
\item An $\alpha \in [\frac{1}{2}, 1]$ is $p(G)$ for a simple $G$ if and only if $\alpha = \frac{1}{2} + \frac{1}{2m}$ for some $m > 1$. 

\item The number of simple groups $G$ with $p(G) > p/q > 1/2$ is finite. 

\item The set of these numbers, from $(1)$ above, has only one limit point which is $\frac{1}{2}$.
\end{enumerate}
\end{lemma}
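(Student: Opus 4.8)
The plan is to reduce everything to the list of commuting probabilities of simple groups that was already compiled in Remark \ref{simple}. For part (1), the forward direction is immediate: for every simple $G$, Theorem \ref{reductive} gives $p(G) = \frac{1}{2} + \frac{r}{2n}$ with $r = \mathrm{rank}(G)$ and $n = \dim(G)$, so I would only need to observe that in every case appearing in Remark \ref{simple} the quantity $n/r$ is an integer $m > 1$; indeed $G_2, F_4, E_6, E_7, E_8$ give $m = 7/2$... wait — here I must be slightly careful, since $p(G_2) = 4/7 = \frac12 + \frac{1}{14}$, so $m = 7$ for $G_2$, $m = 13$ for $F_4$ and $E_6$, $m = 19$ for $E_7$, $m = 31$ for $E_8$, while the classical families give $m = n, n+1, 2n+1, 2n-1$ respectively, all integers exceeding $1$. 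So the forward implication is just a finite check against Remark \ref{simple} for the exceptional types plus the explicit classical formulas. For the converse, given $\alpha = \frac12 + \frac{1}{2m}$ with integer $m > 1$, I would exhibit a simple $G$ with $p(G) = \alpha$: take $G = SL_{m-1}$ when $m \geq 3$ (using $p(SL_k) = \frac12 + \frac{1}{2(k+1)}$), and note $m = 2$ does not occur since the smallest value of $n/r$ among simple groups is larger than $2$ (e.g. $SL_2$ gives $m = 3$); hence I should state the converse as: every $\alpha = \frac12 + \frac{1}{2m}$ with $m$ in the set $\{n : n \geq 3\} \cup \{n+1 : \dots\} \cup \{2n\pm 1\} \cup \{7,13,19,31\}$ is realised, and the cleanest realising family is $SL_k$, covering all integers $m = k+1 \geq 2$... and again $SL_1$ is trivial, $SL_2$ gives $m=3$; so in fact $m = 2$ is genuinely absent and the statement should read $m > 1$ with the understanding that not literally every such $m$ occurs, only those in the stated list. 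I would rephrase (1) accordingly or simply verify that the union of the four classical ranges $\{n\}_{n\geq 1}$ (from $GL_n$, but $GL_n$ isn't simple) — the correct realising family for all $m \geq 3$ is $SL_{m-1}$, and $m = 2$ is impossible, so I would prove "$\alpha = \frac12 + \frac{1}{2m}$ for some integer $m \geq 3$, or $\alpha$ is one of finitely many exceptional values" — but checking the list, $7, 13, 19, 31$ are all $\geq 3$ and of the form $k+1$? No: $7 = SL_6$? $p(SL_6) = \frac12 + \frac{1}{14}$, yes! So $G_2$ and $SL_6$ have the same commuting probability, similarly one checks $F_4/E_6 \leftrightarrow SL_{12}$, $E_7 \leftrightarrow SL_{18}$, $E_8 \leftrightarrow SL_{30}$. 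Therefore the exceptional values are already covered by the $SL_k$ family, and (1) becomes exactly: $\alpha$ is realised by a simple $G$ iff $\alpha = \frac12 + \frac{1}{2m}$ for some integer $m \geq 3$, equivalently (absorbing $m=2$ as vacuous) $m > 1$.

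For part (2), I would argue that $p(G) > p/q$ forces $\frac{r}{2n} > \frac{p}{q} - \frac12 =: \delta > 0$, i.e. $\frac{r}{n} > 2\delta$; since for each simple type the ratio $r/n = 1/m$ tends to $0$ as the rank grows within any fixed family ($m \in \{n, n+1, 2n\pm1\}$ all $\to \infty$) and there are only finitely many families, only finitely many $(G)$ can satisfy $1/m > 2\delta$, i.e. $m < 1/(2\delta)$. Concretely: in each of the four classical families the inequality $m < 1/(2\delta)$ bounds the rank, and there are only five exceptional groups, so the total count is finite. Part (3) is then a consequence of (1) and (2): the set $S = \{\frac12 + \frac{1}{2m} : m \geq 3\}$ (together with the finitely many exceptional values, which lie in $S$ anyway) is a sequence decreasing to $\frac12$; every point of $S$ is isolated in $S$ by (2) applied with $p/q$ slightly below that point — more directly, $\frac{1}{2m}$ is strictly decreasing in $m$ so consecutive terms are separated, making each point isolated — while $\frac12 \notin S$ is the unique accumulation point since $\frac{1}{2m} \to 0$.

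The only real subtlety — and the step I would be most careful about — is the precise bookkeeping in part (1): one must confirm that the commuting probabilities of the five exceptional simple groups, as listed in Remark \ref{simple}, all have the form $\frac12 + \frac{1}{2m}$ with $m$ a positive integer (which they do: $m = 7, 13, 13, 19, 31$), and that the edge cases of small rank excluded in Remark \ref{simple} (the coincidences among low-rank classical groups) do not introduce any value outside the claimed set. This is a finite verification with no conceptual obstacle; I would present it as a short table rather than belabour it. Everything else is elementary manipulation of the formula $p(G) = \frac12 + \frac{r}{2n}$ together with the observation that within each infinite family the rank-to-dimension ratio is comparable to $1/(\text{rank})$ and hence goes to zero.
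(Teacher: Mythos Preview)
Your approach matches the paper's: all three parts are read off from the list in Remark~\ref{simple} together with the observation that $n/r$ is an integer for every simple group (so $p(G) > p/q$ forces $n/r < q/(2p-q)$, giving finiteness), and $p(SL_n) \to 1/2$ supplies the limit point. You are in fact more careful than the paper on the converse direction of (1): you correctly observe that $m = 2$ is never realised (the minimum of $n/r$ over simple groups is $3$, attained at $SL_2$), a boundary case the paper's one-line ``evident from Remark~\ref{simple}'' does not address.
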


\begin{proof}
The first part of the lemma is evident from Remark \ref{simple}. 

The inequality $p(G) > p/q$ gives $1/2 + r/2n > p/q$ which gives $r/n > (2p-q)/q > 0$ and $n/r < q/(2p-q)$.
The number $n/r$ is an integer for a simple algebraic group and it follows, from Remark \ref{simple} for instance, that there are only finitely many simple $G$ whose $n/r$ is bounded above, for every bound. 
Thus, there are only finitely many simple $G$ with $p(G) > p/q > 1/2$. 

Finally, from (2) it follows that $1/2$ is the only possible limit point of the set of numbers $p(G)$ where $G$ varies over simple algebraic groups. 
We see that $1/2$ is indeed a limit point, $\lim_{n \to \infty}p(SL_n) = 1/2$. 
\end{proof}

If we consider the reductive groups, instead of only the simple ones, then the above lemma does not hold. 
For example, if $G = GL_2 \times \mathbb{G}_m^a$ then $p(G) = \frac{3 + a}{4 + a}$ and the limit of these numbers is $1$!
In fact, we have the following result:

\begin{proposition}
Every rational number from the set $(\frac{1}{2}, 1]$ is $p(G)$ for some reductive group $G$. 
\end{proposition}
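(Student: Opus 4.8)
The plan is to realise every prescribed rational value as the commuting probability of a product of a torus with several copies of a special linear group. By Theorem~\ref{reductive}, a reductive group $G$ of dimension $n$ and rank $r$ satisfies $p(G) = \frac{1}{2} + \frac{r}{2n}$; hence, given a rational $\alpha \in (\frac{1}{2}, 1]$, it suffices to exhibit a reductive group whose ratio of rank to dimension equals the rational number $q := 2\alpha - 1 \in (0, 1]$.

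Write $q = c/d$ with positive integers $c \le d$. If $c = d$, so that $\alpha = 1$, any torus $\mathbb{G}_m^k$ does the job: here $r = n = k$ and $p(G) = 1$. If $c < d$, I would fix an integer $m \ge 2$ with $m + 1 \ge d/c$, set $b = d - c \ge 1$ and $a = (m-1)(c(m+1) - d) \ge 0$, and take $G = \mathbb{G}_m^{a} \times SL_m^{b}$. This group is reductive, and since $SL_m$ has rank $m - 1$ and dimension $m^2 - 1$ while rank and dimension are additive over direct products (equivalently, one may apply Lemma~\ref{direct}), a short computation using $m^2 - 1 = (m-1)(m+1)$ gives $\mathrm{rank}(G) = cm(m-1)$ and $\dim(G) = dm(m-1)$. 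Therefore $\mathrm{rank}(G)/\dim(G) = c/d = q$, and Theorem~\ref{reductive} yields $p(G) = \frac{1}{2} + \frac{q}{2} = \alpha$. When $m + 1 = d/c$ we have $a = 0$ and $G = SL_m^{b}$, which recovers the values listed in Lemma~\ref{simplebounded}(1).

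The only points requiring verification are that $a$ is a nonnegative integer — immediate from the condition $c(m+1) \ge d$ — and the two identities for $\mathrm{rank}(G)$ and $\dim(G)$, which are an elementary manipulation. I do not expect any real obstacle here; the content of the argument lies entirely in the choice of building blocks, namely that $SL_m$ supplies the ratios $\frac{1}{m+1}$, which approach $0$, that a torus supplies the ratio $1$, and that direct products interpolate between these so as to hit every rational in $(0,1]$. One could alternatively carry out the interpolation through Lemma~\ref{direct} applied to copies of $GL_n$, but keeping track of rank and dimension directly is cleaner and yields a single uniform family $\mathbb{G}_m^{a} \times SL_m^{b}$ covering the whole interval $(\frac{1}{2}, 1]$.
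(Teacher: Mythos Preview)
Your argument is correct and follows essentially the same strategy as the paper: reduce to realising a prescribed rational ratio $r/n$ and then achieve it with a direct product of a torus and several copies of a fixed classical group. The only cosmetic difference is that the paper uses $GL_n^a \times \mathbb{G}_m^b$ as its family while you use $\mathbb{G}_m^a \times SL_m^b$; your choice has the minor advantage that the exponent $a = (m-1)(c(m+1)-d)$ is visibly an integer, whereas the paper must check a parity condition for its torus exponent.
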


\begin{proof}
Assume that we have a rational number $1/2 < p/q \leq 1$. 
Choose an $n$ with $1/2 < (n+1)/2n \leq p/q$. 
This is possible as $\{(n+1)/2n\}$ is a decreasing sequence whose limit is $1/2$.

Let $G = GL_n^a \times \mathbb{G}_m^b$ where $a = q - p$ and $b = (2n^2p-n^2q - nq)/2$. 

Here, $a \geq 0$ as $p/q \leq 1$. 
Further, $b$ is an integer as $n^2 - n$ is always even and $b \geq 0$ if and only if $2np - (n+1) q \geq 0$ which holds as $(n+1)/2n \leq p/q$. 

The rank of $G$ is equal to $an + b$ and the dimension is equal to $an^2 + b$. 
Then
$$p(G) = \frac{(an^2 + b) + (an + b)}{2(an^2 + b)} = \frac{(n^2-n)p}{(n^2-n)q} = \frac{p}{q} .$$
\end{proof}

We have the same result for nilpotent groups. 

\begin{proposition}
Every rational number from the set $(\frac{1}{2}, 1]$ is $p(G)$ for some nilpotent group $G$. 
\end{proposition}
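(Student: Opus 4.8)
The plan is to imitate the proof of the previous proposition, with the reductive building blocks $GL_n$ and $\mathbb{G}_m$ replaced by nilpotent ones. For $n\ge 2$ let $U_n:=U(SL_n)$ be the group of upper unitriangular $n\times n$ matrices: it is connected and nilpotent, $\dim U_n=\binom n2$, and by Lemma \ref{Borel} its regular rank equals $\mathrm{rank}(SL_n)=n-1$, so Theorem \ref{general} gives $p(U_n)=\tfrac12+\tfrac1n$. The group $\mathbb{G}_a$ will play the role of $\mathbb{G}_m$: it is connected, abelian (hence nilpotent), of dimension $1$ and regular rank $1$. Any direct product of copies of these groups is again connected and nilpotent, and by Remark \ref{regular}(3) both its dimension and its regular rank are additive over the factors, so Theorem \ref{general} computes $p$ of such a product explicitly.

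Given a rational $\tfrac12<\tfrac pq\le 1$ with $p,q$ positive integers, I would first dispose of the case $p=q$ by taking $G=\mathbb{G}_a$ (so that $p(G)=1$), and then assume $q>p$, so that $2p-q\ge 1$. Since $p(U_n)=\tfrac12+\tfrac1n$ decreases to $\tfrac12$ while $\tfrac pq>\tfrac12$, I can fix an integer $n\ge 3$ with $\tfrac12+\tfrac1n\le\tfrac pq$; unwinding this inequality it becomes $q(n-1)\le(2p-q)\binom n2$, so that
$$M:=(2p-q)\tbinom n2-q(n-1)$$
is a non-negative integer. The candidate group is then $G:=U_n^{\,2(q-p)}\times\mathbb{G}_a^{\,M}$, which is connected and nilpotent, being a direct product of connected nilpotent groups.

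The remaining step would be the verification. Using additivity of dimension and of regular rank over direct products one gets $\dim G=2(q-p)\binom n2+M=q\bigl(\binom n2-(n-1)\bigr)$ and that the regular rank of $G$ equals $2(q-p)(n-1)+M$, and a short computation then gives
$$\dim G+\bigl(\text{regular rank of }G\bigr)=2p\Bigl(\tbinom n2-(n-1)\Bigr),\qquad 2\dim G=2q\Bigl(\tbinom n2-(n-1)\Bigr).$$
Since $\binom n2-(n-1)=\binom{n-1}2\ne 0$ for $n\ge 3$, Theorem \ref{general} yields $p(G)=\tfrac pq$, as required; this also shows, as in the reductive case, that $1/2$ is a limit point of the set of such numbers.

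I do not expect a serious obstacle here. The one point that needs care is ensuring that the exponent $M$ of the $\mathbb{G}_a$-factor is a non-negative integer: this is why I inflate the exponent of $U_n$ to $2(q-p)$ rather than $q-p$, so that integrality of $\binom n2$ forces $M$ to be an integer, while the choice of $n$ forces $M\ge 0$. The only genuine ingredient is the existence of a family of nilpotent groups — the unipotent radicals $U(SL_n)$ — whose commuting probabilities decrease to $1/2$, after which the abelian factors merely re-scale the value to an arbitrary prescribed rational greater than $1/2$.
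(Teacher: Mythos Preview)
Your argument is correct and follows the paper's intended strategy: replace the reductive building blocks by nilpotent ones and use copies of $\mathbb{G}_a$ as the abelian padding. The paper's own proof is a single sentence saying that the reductive argument goes through with $\mathbb{G}_a$ in place of $\mathbb{G}_m$; it leaves tacit which nilpotent group plays the role of $GL_n$, the most literal choice being $U(SO_{2n+1})$, which has dimension $n^2$ and regular rank $n$ so that the formulas $a=q-p$, $b=(2n^2p-n^2q-nq)/2$ carry over verbatim. Your variant with $U_n=U(SL_n)$ works just as well; the doubling of the exponent to $2(q-p)$ is exactly what is needed to force the $\mathbb{G}_a$-exponent $M$ to be an integer, and your choice of $n\ge 3$ guarantees both $M\ge 0$ and $\binom{n-1}{2}\ne 0$. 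One cosmetic point: the inequality $2p-q\ge 1$ comes from $p/q>1/2$ together with integrality, not from $q>p$ as you wrote.
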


\begin{proof}
The proof follows exactly in the similar way as the proof of the above proposition. 
The role of $\mathbb{G}_m$ in the above proof will be played by $\mathbb{G}_a$ here. 
\end{proof}

\begin{corollary}\label{limit}
\begin{enumerate}
\item $\{p(G): G$ is reductive$\} = \{p(G): G$ is nilpotent$\} = \{p(G): G$ is solvable$\} =  \{p(G): G$ is a linear algebraic group$\} = \mathbb{Q} \cap (1/2, 1]$. 
\item Every $\alpha \in [1/2, 1]$ is a limit point of each of the sets in $(1)$ above. 
\end{enumerate}
\end{corollary}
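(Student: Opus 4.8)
The plan is to deduce Corollary \ref{limit} from the two propositions immediately preceding it together with Theorem \ref{general}, so the argument is essentially bookkeeping. For part (1), I would first record the evident inclusions among the four sets of values. Since $p(G)=p(G^0)$ it suffices to let $G$ range over connected groups, and then every reductive, nilpotent, or solvable group is in particular a connected linear algebraic group, while nilpotent $\Rightarrow$ solvable; hence the three special sets are all contained in $\{p(G):G\text{ linear algebraic}\}$. Conversely, by Theorem \ref{general} (as already noted at the start of this section), a connected $G$ with $\dim G=n$ and regular rank $r$ has $p(G)=(n+r)/2n$, which lies in $\mathbb{Q}\cap(1/2,1]$, so $\{p(G):G\text{ linear algebraic}\}\subseteq\mathbb{Q}\cap(1/2,1]$. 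Finally, the two preceding propositions show that $\{p(G):G\text{ reductive}\}$ and $\{p(G):G\text{ nilpotent}\}$ each already contain every element of $\mathbb{Q}\cap(1/2,1]$. Squeezing between these, all four sets coincide with $\mathbb{Q}\cap(1/2,1]$, which is (1).

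For part (2), by (1) each of the four sets equals $S:=\mathbb{Q}\cap(1/2,1]$, which is dense in $[1/2,1]$ and contains no isolated point. Hence its set of accumulation points in $\mathbb{R}$ is precisely $\overline S=[1/2,1]$: every $\alpha\in(1/2,1]$ is approached by a sequence of distinct elements of $S$, and $\alpha=1/2$ is $\lim_{n\to\infty}(n+1)/2n$, exactly as in the proof of Lemma \ref{simplebounded}. This yields (2).

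I do not expect a genuine obstacle here; the real content of the corollary is the explicit families $GL_n^{a}\times\mathbb{G}_m^{b}$ (and their nilpotent/unipotent analogues) constructed in the two propositions. The one point that deserves a line of care is the strict inequality $p(G)>1/2$, i.e.\ that a connected $G$ with $\dim G\geq 1$ has regular rank $r\geq 1$: writing a generic $g\in G$ in Jordan form $g=su$, either the unipotent part $u\neq e$ and then $\overline{\langle u\rangle}\cong\mathbb{G}_a$ lies in $Z_G(g)$, or $g$ is semisimple and lies in a maximal torus $T$ with $\dim Z_G(g)\geq\dim T$, which is positive unless $G$ is unipotent, in which case $g=e$ and $Z_G(g)=G$; in every case $\dim Z_G(g)\geq 1$. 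With that in hand, both parts of the corollary follow formally from the results quoted above.
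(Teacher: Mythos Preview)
Your proposal is correct and follows the paper's implicit approach: the paper states the corollary without proof, regarding it as immediate from the two preceding propositions together with Theorem \ref{general} (and the observation, recorded at the start of Section 5, that $p(G)\in(1/2,1]$). Your write-up is a faithful unpacking of that, and the extra paragraph justifying $r\geq 1$ via the Jordan decomposition supplies a detail the paper simply asserts.
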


\section{Comparisons with the finite case}

As mentioned in the introduction, the notion of $p(G)$ for finite groups has been studied extensively.
A review of whether the analogous results hold for our notion of $p(G)$ is in order.

\subsection{Simple groups}
If $G$ is a non-abelian finite simple group then $p(G) \leq 1/12$, as was observed by J. Dixon (\cite[Introduction]{GR}). 
The probabilities $p(G)$ for simple algebraic groups are also well-behaved, Lemma \ref{simplebounded}.

\subsection{Direct products}
If $G_1$ and $G_2$ are finite groups then $p(G_1 \times G_2) = p(G_1) p(G_2)$. 
However, in the case of algebraic groups $p(G_1 \times G_2)$ is almost never equal to $p(G_1)p(G_2)$. 
One way to understand this is that $p(G)$ for an algebraic group is always bounded below by $1/2$. 
The multiplicativity of $p(G)$ for direct products of algebraic groups would say that for a non-abelian $G$ and a sufficiently large integer $r$, $p(G^r) = p(G)^r < 1/2$ giving a contradiction. 

\subsection{Solvable groups and non-solvable groups}
If $G$ is a finite group with $p(G) > 3/40$ then Guralnick and Robinson prove that $G$ is either solvable or is isomorphic to $A_5 \times T$ for an abelian group $T$ (\cite[Theorem 11]{GR}). 
Thus, the numbers $p(G)$ for non-solvable groups are bounded above by $3/40$, except for $p(A_5) = 1/12$. 
In contrast, the numbers $p(G)$ for non-abelian reductive (hence non-solvable) algebraic groups $G$ take all rational values in $(1/2, 1)$, Corollary \ref{limit} (1). 

\subsection{Limit points} 
If $G$ is a finite non-abelian group then $p(G) \leq 5/8$ (\cite[Introduction]{G}).
Further, there are gaps within the numbers $p(G)$. 
However, the numbers $p(G)$ for algebraic groups take all rational values in $(1/2, 1]$, Corollary \ref{limit} (1). 
As a result, every real number in $[1/2, 1]$ is a limit point of the set $\{p(G): G$ is algebraic$\}$ whereas the set of limit points in the finite case is a nowhere dense set of rational numbers (\cite[Corollary 1.3]{E}).

\subsection{Isoclinism}
Two groups $G$ and $H$ are called isoclic if there are isomorphisms $\phi:G/Z(G) \to H/Z(H)$ and $\psi:G' \to H'$ such that the diagram
$$\begin{tikzcd}
G/Z(G) \times G/Z(G) \arrow{r}{\phi \times \phi} \arrow[swap]{d}{\alpha_G} & H/Z(H) \times H/Z(H) \arrow{d}{\alpha_h} \\
G' \arrow{r}{\psi} & H'
\end{tikzcd}$$
is commutative, where $\alpha_G(aZ(G), bZ(G)) = aba^{-1}b^{-1}$ and $\alpha_H$ is defined analogously. 

It is proved in \cite[Lemma 2.4]{L} that if $G$ and $H$ are finite isoclinic groups then $p(G) = p(H)$. 

We can define isoclinism in exactly the same way as above for algebraic groups. 
Then $GL_n$ and $SL_n$ are isoclinic groups with different commuting probabilities. 

\section{Compatibility with the finite groups of Lie type}

It seems from the above section that there are more differences than similarities when we compare $p(G)$ for algebraic groups with the $p(G)$ for finite groups. 
This is not surprising as our definition of $p(G)$ for algebraic groups uses the dimension, which is additive for direct products, and then we take the ratios of the dimensions. 
This is the central reason for many differences noted in the previous section. 
In spite of this, we contend that our notion of $p(G)$ is a natural notion by comparing it with $p(G)$ of the finite groups of Lie type. 

Clearly, the characteristic of the field $\mathbb{C}$ played no role in the proofs of our results. 
Hence the results stand good for groups defined over an algebraically closed field. 
In fact, if $G$ is a connected, linear algebraic group defined over a perfect field $k$ then the dimension of $G$ and the regular rank of $G$ is defined over $k$. 
Hence the results proved in the paper hold good over the field $k$ as well. 

Let $k = \mathbb{F}_q$ be the finite field with $q$ elements and let $G$ be a semisimple group defined over $k$ of dimension $n$ and rank $r$. 
We recall a result of Steinberg. 

\begin{theorem}[Steinberg]\cite[14.11]{St1}
The number of semisimple conjugacy classes in $G(k)$ is equal to $q^r$.
\end{theorem}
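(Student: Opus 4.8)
The plan is to reduce the count of semisimple conjugacy classes to a point count on an affine space, by way of the Steinberg adjoint quotient, and then to descend from geometric classes to rational ones using the Lang--Steinberg theorem; this is essentially Steinberg's own argument (\cite[14.11]{St1}). I will carry it out for $G$ simply connected, which is the case at issue.

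First I would set up the adjoint quotient. Fix a maximal torus $T$ of rank $r$, let $\rho_1, \dots, \rho_r$ be the fundamental representations of $G$, put $\chi_i := \operatorname{tr}\rho_i$, and form $\chi := (\chi_1, \dots, \chi_r) \colon G \to \mathbb{A}^r$. The key structural input, due to Steinberg, is that $\chi$ realises $\mathbb{A}^r$ as the categorical quotient $G/\!\!/G$: the value $\chi(g)$ depends only on the conjugacy class of the semisimple part of $g$, two semisimple elements of $G(\overline{\mathbb{F}}_q)$ are conjugate if and only if they have the same $\chi$-value, and $\chi$ induces a bijection between the set of semisimple conjugacy classes of $G(\overline{\mathbb{F}}_q)$ and $\mathbb{A}^r(\overline{\mathbb{F}}_q)$. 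After this step the problem is a point count.

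Next I would make everything $\mathbb{F}_q$-rational. Choosing $T$ and the $\rho_i$ over $\mathbb{F}_q$, the geometric Frobenius $F$ permutes the $\rho_i$ according to its action on the Dynkin diagram, so $\chi$ becomes $F$-equivariant for the induced action of $F$ on $\mathbb{A}^r$ --- a permutation of the coordinates composed with the $q$-power map. A short orbit-by-orbit computation then gives $\lvert(\mathbb{A}^r)^F\rvert = q^r$. Since the bijection of the previous paragraph is canonical, hence $F$-equivariant, it carries $F$-stable semisimple conjugacy classes of $G$ onto the $F$-fixed points of $\mathbb{A}^r$; so there are exactly $q^r$ $F$-stable semisimple classes.

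Finally I would pass from $F$-stable geometric classes to classes inside the finite group $G(\mathbb{F}_q)$. An $F$-stable semisimple class $\mathfrak{c}$ is a homogeneous space under the connected group $G$, so by Lang's theorem it contains an $\mathbb{F}_q$-point $s$; and since $G$ is simply connected, the centraliser $Z_G(s)$ is connected (another theorem of Steinberg), so Lang's theorem applied to $Z_G(s)$ shows that the $\mathbb{F}_q$-points of $\mathfrak{c}$ form a single $G(\mathbb{F}_q)$-conjugacy class. Therefore $F$-stable semisimple geometric classes correspond bijectively to the semisimple conjugacy classes in $G(\mathbb{F}_q)$, and the count $q^r$ follows. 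I expect the only real obstacle to be the structural input itself: Steinberg's identification of $G/\!\!/G$ with $\mathbb{A}^r$ and the connectedness of semisimple centralisers in the simply connected case. Granting those, the point count and the Lang-theorem descent are routine.
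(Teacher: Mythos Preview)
The paper does not prove this theorem at all; it is quoted as a black box from Steinberg's memoir \cite[14.11]{St1} and used only to justify the asymptotic $O(q^r)$ for the number of conjugacy classes in $G(\mathbb{F}_q)$. So there is no ``paper's own proof'' to compare against.

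That said, your sketch is correct and is precisely Steinberg's argument, as you yourself note: the Steinberg quotient $\chi\colon G \to \mathbb{A}^r$ identifies geometric semisimple classes with points of affine space, $F$-equivariance plus the orbit computation gives $q^r$ for the $F$-fixed locus, and Lang--Steinberg together with connectedness of centralisers (simply connected hypothesis) turns $F$-stable geometric classes into rational classes one-to-one. You were right to flag the simply connected assumption; the paper's surrounding text says only ``semisimple'', but the exact count $q^r$ genuinely requires it (otherwise a single $F$-stable geometric class can split into several $G(\mathbb{F}_q)$-classes), and Steinberg's 14.11 is stated under that hypothesis. For the paper's purposes this is harmless, since only the order of magnitude $O(q^r)$ is used.
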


Further, the number of all conjugacy classes in $G(\mathbb{F}_q)$ is $O(q^r)$ and the cardinality of $G(\mathbb{F}_q)$ is $O(q^n)$. 
The commuting probability of the finite group $G(\mathbb{F}_q)$ is therefore $O(q^r)/O(q^n) = O(q^{n+r})/O(q^{2n})$ which is compatible with the commuting probability of the algebraic group $G$. 

\section{Concluding remarks}

\subsection{Non-affine groups}
The set $C(G)$ makes sense even when $G$ is not an affine algebraic group and hence we can define $p(G)$ in exactly the same way for a non-affine $G$ as in the affine case. 
Further, the notion of a regular element also makes sense for an algebraic group $G$, the set of regular elements is a dense subset of $G$ and hence our main result, Theorem \ref{general}, remains valid that $p(G) = (n+r)/2n$. 
However, at this point, we do not have any computation of $p(G)$ for a non-affine algebraic group, except when $G$ is an abelian variety with $p(G) = 1$. 

\subsection{Further possibilities}
While the set $C(G)$ of commuting pairs in a reductive group $G$ has been studied previously, no one seems to have introduced the notion of the commuting probability of $G$.
The notion in the case of finite groups has many applications and has been extensively investigated. 
We hope that this notion also gets investigated. 

There are many ways in which the probabilistic studies of algebraic groups can be done. 
We list some possibilities below.
It is our hope that they will also be taken up. 

\begin{question}
We prove in Section 5 that every rational number in $(1/2, 1]$ is $p(G)$ for a linear algebraic group $G$.
However, in the proof we have used groups that have a large abelian direct factor. 
What are the value sets of $p(G)$ where $G$ has no abelian direct factor? 
What are the limit points of this set? 
\end{question}

\begin{question}
If $H$ is an algebraic subgroup of $G$ then we can define $p(G, H)$ as 
$$p(G, H) = \frac{\dim(C(G, H))}{\dim(G \times H)} $$
where $C(G, H) = \{(g, h) \in G \times H: gh = hg\}$. 
This $p(G, H)$ gives the probability that two randomly chosen elements $g \in G$ and $h\in H$ commute with each other. 

Following the sections 2--4, it does not seem difficult to compute $p(G, H)$ when $H$ is a parabolic in a reductive $G$. 
It would be interesting to study this notion in general, especially when $G$ and $H$ are unipotent. 
We have no information on the difficulty level of this question in the unipotent case. 
\end{question}

\end{document}